\date{}
\theoremstyle{plain}
\newtheorem{theorem}{Theorem}[section]
\newtheorem{definition}{Definition}[section]
\newtheorem{remark}{Remark}[section]
\newtheorem{corollary}{Corollary}[section]
\newtheorem{example}{Example}[section]
\numberwithin{equation}{section}
\title{On Andrews' partitions with parts separated by parity}
\author{Abdulaziz M. Alanazi  and Darlison Nyirenda}
\begin{document}
\maketitle

\begin{abstract}
In this paper, we present a generalization of one of the theorems in [G. E. Andrews, Partitions with parts separated by parity, \textit{Annals of Combinatorics} \textbf{23}(2019), 241 - 248], and give its bijective proof. Further variations of related partition functions are studied resulting in a number of interesting identities. 
\end{abstract}

\section{Introduction, Definitions, Notation}
Parity in partitions has played a useful role.  A partition of an integer $n > 0$ is a representation $(\lambda_1, \lambda_2, \ldots, \ldots)$ where $\lambda_{i} > \lambda_{i + 1}$ for all $i$ and $\sum\limits_{j \geq 1} \lambda_{j} = n$. The integer $n$ is called the weight of the partition. However when further restrictions are imposed on the parts $\lambda_i$'s, we get unrestricted partition functions. One such is the number of partitions into distinct parts. This means each part in a partition occurs only once. Parity of this partition function is known, and several authors, including Andrews \cite{andrewsparity} have delved into a broader subject, where parity affects parts of partitions.   There are various resources on the theory of integer partitions, and the interested reader is referred to \cite{andrews0}. On this specific subject, one may consult \cite{andrewsparity}, and citations listed in \cite{andrews}. 
\begin{definition}
Consider a partition $\lambda$ of $n$. Suppose $\lambda = (\lambda_1^{m_1}, \lambda_2^{m_2}, \ldots, \lambda_{l}^{m_{ell}})$ where $m_i$ is the multiplicity of $\lambda_i$ and $\lambda_1 > \lambda_2 > \ldots > \lambda_{\ell}$. Define another partition $\lambda^{\prime}$ whose  $j-\text{th}$ part is given by
$$ \lambda_{j}^{\prime} =  \left(\sum\limits_{i=1}^{\ell -j + 1} m_i\right)^{\lambda_{\ell - j + 1} - \lambda_{\ell - j + 2}},\quad \quad \text{where}\,\,\,\lambda_{\ell + 1} : = 0.$$
The partition $\lambda^{\prime}$ is called the conjugate of $\lambda$ and has weight $n$.
\end{definition}
\noindent Given two partitions $\lambda$ and $\mu$, we consider the union $\lambda \cup \mu$ to be the multiset union, and $\lambda + \mu$  is the sum of two partitions obtained via vector addition in which the $i^{\text{th}}$ largest part of $\lambda + \mu$ is equal to the sum of the $i^{\text{th}}$ largest parts in $\lambda$ and $\mu$. In finding the sum $\lambda +  \mu$, the partition with smaller length must have zeros appended to it in order to match in length with the other partition. Similar rules apply to computing $\lambda - \mu$. \\
\noindent Suppose $\mu$ is a subpartition of $\lambda$. We define a new partition $sub(\lambda , \mu)$ to be a partition obtained by deleting  $\mu$ from $\lambda$. For instance $sub((8,7^2, 6^3, 2^3), (7,6^3, 2)) = (8,7,2^{2})$. Further,  $L_{k}(\lambda)$ is the partition obtained by multiplying $k$ to each part of $\lambda$  whose multiplicity is divisible by $k$ and dividing its multiplicity by $k$. On the other hand, $L_{k}^{-1}(\lambda)$ is obtained by dividing by $k$ each part part divisible by $k$ and multiplying its multiplicity by $k$.\\
For $q$-series, we use the following standard notation:\\
$$(a;q)_{n} = \prod_{i = 1}^{n - 1}(1 - aq^{i}),\quad \quad (a;q)_{\infty} = \lim\limits_{n\rightarrow \infty}(a;q)_{n}, \quad \quad (a;q)_{n} = \frac{(a;q)_{\infty}}{(aq^{n};q)_{\infty}}.$$
Some $q$-identities which will be useful  are recalled as follows:
\begin{equation}\label{here0}
\sum\limits_{n = 0}^{\infty}\frac{(a;q)_{n}}{(q;q)_{n}}q^{\frac{n(n + 1)}{2}} = \prod\limits_{n = 1}^{\infty}(1 - aq^{2n - 1})(1 + q^{n}),
\end{equation}
\begin{equation}\label{here1}
\sum\limits_{n = 0}^{\infty}\frac{q^{2n^2}}{(q;q)_{2n}} = \prod\limits_{n = 1}^{\infty}\frac{(1 + q^{8n - 3})(1 + q^{8n - 5})(1 - q^{8n})}{1 - q^{2n}},
\end{equation}
\begin{equation}\label{here2}
\sum_{n = 0}^{\infty}\frac{(a;q)_{n}(b;q)_{n}}{(q;q)_{n}(c;q)_{n}}z^{n} = \frac{(b;q)_{\infty}(az;q)_{\infty}}{(c;q)_{\infty}(z;q)_{\infty}}\sum_{n = 0}^{\infty} \frac{(c/b;q)_{n}(z;q)_{n}}{(q;q)_{n}(az;q)_{n}}b^{n}, \,\,\, |z| < 1, |b| < 1, |q| < 1.
\end{equation}
\noindent For proof of the above identities, see  \cite{igor}, \cite{slater} and \cite{andrews0}, respectively.\\
Euler discovered the following theorem
\begin{theorem}[Euler, \cite{andrews0}]
The number of partitions of $n$ into odd parts is equal to the number of partitions of $n$ into distinct parts.  
\end{theorem}
This theorem has an interesting bijective proof supplied by J .W. L Glaisher (see \cite{glaisher}). We shall denote Glaisher's map by $\phi$.  In fact $\phi$ converts a partition into odd parts into a partition into disctinct parts.

Let $\lambda = (\lambda_{1}^{m_1}, \lambda_{2}^{m_2}, \ldots,\lambda_{r}^{m_r})$ be a partition of $n$ whose parts are odd. Note that the notation for $\lambda$ implies $\lambda_1 > \lambda_{2} > \ldots$ are parts with multiplicities $m_1, m_2, \ldots$, respectively.\\
\noindent Now, write $m_i$'s in $k$-ary expansion, i.e. 
$$ m_i = \sum\limits_{j = 0}^{l_i}a_{ij}2^{j} \,\,\,\,\text{where}\,\,\,\,0\leq a_{ij}\leq 1.$$
We map $\lambda_i^{m_{i}}$ to $\bigcup_{j= 0}^{l_i}(2^{j}\lambda_i)^{a_{ij}}$, where now $2^{j}\lambda_i$ is a part with multiplicity $ a_{ij}$. The image of $\lambda$ which we shall denote by $\phi(\lambda)$, is given by $$\bigcup_{i = 1}^{r} \bigcup_{j = 0}^{l_i} (2^{j}\lambda_{i})^{a_{ij}}.$$
\noindent Clearly, this is a partition of $n$ with distinct parts.\\
\noindent On the other hand, assume that $\mu = (\mu_1^{f_1}, \mu_{2}^{f_2}, \ldots)$ is a partition of $n$ into ditinct parts. Write $\mu_i = 2^{r_i}\cdot a_i$ where $2\nmid a_i$ and then map $\mu_i^{f_i}$ to $(a_i)^{2^{r_i}f_i}$ for each $i$, where now $a_i$ is a part with multiplicity $2^{r_i}f_i$. The inverse of $\phi$ is then given by  
$$\phi^{-1}(\mu) = \bigcup_{i \geq 1}(a_i)^{2^{r_i}f_i}.$$ 
In the resulting partition, it is also clear that the parts are odd. \\
We also recall the following notation from \cite{andrews}.\\
$p_{eu}^{od}(n)$: the number of partitions of $n$ in which odd parts are distinct and greater than even parts.\\
\noindent $\mathcal{O}_{d}(n)$: the number of partitions of $n$ in which the odd parts are distinct and each odd integer smaller than the largest odd part must appear as a part. Theorem 2 of \cite{andrews} is restated below.
\begin{theorem}[Andrews, \cite{andrews}]\label{and0}
For $n\geq 0$, we have
$$ p_{eu}^{od}(n) = \mathcal{O}_{d}(n).$$
\end{theorem}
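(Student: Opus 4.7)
My plan is to construct an explicit bijection $\phi$ from the partitions counted by $p_{eu}^{od}(n)$ to those counted by $\mathcal{O}_d(n)$. The central idea is a ``staircase'' decomposition of the odd parts that moves the excess into the even component.

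Given a partition $\lambda$ counted by $p_{eu}^{od}(n)$, with odd parts $o_1 > o_2 > \cdots > o_s$ and even-part multiset $E$ (so the largest element of $E$ is at most $o_s - 1$), one can write uniquely
$$o_i = \bigl(2(s-i+1)-1\bigr) + 2b_i, \qquad i=1,2,\ldots,s,$$
with $b_1 \geq b_2 \geq \cdots \geq b_s \geq 0$. I would then define $\phi(\lambda)$ to be the partition whose odd parts are exactly $1, 3, 5, \ldots, 2s-1$ and whose even-part multiset is $E \cup \{2b_1, 2b_2, \ldots, 2b_s\}$, with any zero entries discarded.

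The first verification is that $\phi(\lambda) \in \mathcal{O}_d(n)$: its odd parts are distinct and form a complete initial segment of positive odd integers, while the total is preserved because $\sum_{i=1}^{s} o_i = s^2 + 2\sum_{i=1}^{s} b_i$. For the inverse $\psi$, given $\mu \in \mathcal{O}_d(n)$ with odd parts $1, 3, \ldots, 2s-1$ and even parts $e_1 \geq e_2 \geq \cdots \geq e_r$ (allowing $s=0$), I would extract the $s$ largest even parts $e_1, \ldots, e_s$ (padded with zeros if $r<s$), set $b_i = e_i/2$, form odd parts $o_i = (2(s-i+1)-1) + 2b_i$, and retain $e_{s+1}, \ldots, e_r$ as the even parts of $\psi(\mu)$.

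The main obstacle is verifying that $\psi(\mu)$ genuinely belongs to the $p_{eu}^{od}$ class, i.e.\ that its smallest odd part strictly exceeds its largest remaining even part: this follows since $o_s = 1 + 2b_s = 1 + e_s$, while $e_{s+1} \leq e_s < o_s$. Once this is in place, the identities $\phi \circ \psi = \mathrm{id}$ and $\psi \circ \phi = \mathrm{id}$ reduce to the uniqueness of the staircase decomposition used at the outset. A small instance, e.g.\ $7+3 \mapsto 4+3+2+1$ for $n=10$, serves as a sanity check.
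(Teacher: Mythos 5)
Your proposal is correct and is essentially the paper's own bijection specialized to $p=2$, $r=1$: your map $\phi$ (subtracting the odd staircase $2s-1,2s-3,\ldots,1$ and moving the residue $2b_i$ into the even parts) is exactly the paper's inverse map $\mu \mapsto \mu_2 \cup \mu_3 \cup [\mu_1-\mu_3]$, and your $\psi$ is the paper's forward map $\lambda \mapsto \lambda_1+\lambda_2$. The only difference is that the paper proves the more general statement for parts $\equiv 0,r \pmod{p}$, of which this is the case $p=2$, $r=1$.
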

\noindent In this paper, we generalise Theorem \ref{and0} and look at various variations. 
\section{A generalisation of Theorem \ref{and0}}
Define $D(n,p,r)$ to be the number of partitions of $n$ in which parts are congruent to $0, r \pmod{p}$, and each part congruent to $ r\pmod{p}$ is distinct and greater than parts congruent to $0 \pmod{p}$. Our theorem is stated below.
\begin{theorem}\label{and1}
Let $O(n,p,r)$ be the number of partitions of $n$ in which parts are congruent to $0, r \pmod{p}$, parts $\equiv r \pmod{p}$ are distinct, and  each integer congruent to $r \pmod{p}$ smaller than the largest part that is congruent to $r\pmod{p}$  must appear as a part. Then 
$$ D(n,p,r) = O(n,p,r).$$ 
\end{theorem}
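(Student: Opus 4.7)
The plan is to construct an explicit, size-preserving bijection $\phi$ from partitions counted by $D(n,p,r)$ to those counted by $O(n,p,r)$, which generalizes the classical staircase-extraction argument underlying Theorem~\ref{and0}. Fix $\lambda$ counted by $D(n,p,r)$ and (without loss of generality) $1\le r<p$. Write the parts of $\lambda$ that are $\equiv r\pmod p$ in increasing order as $r+a_0p<r+a_1p<\cdots<r+a_kp$ (possibly an empty block), and let $\mu$ be the sub-partition formed by the remaining parts, all of which are $\equiv 0\pmod p$. The hypothesis that parts $\equiv r$ dominate parts $\equiv 0$ forces every part of $\mu$ to be a positive multiple of $p$ with $\mu_1\le a_0p$.

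Next I would set $a_i=i+c_i$, so the distinctness condition $a_0<a_1<\cdots<a_k$ becomes $0\le c_0\le c_1\le\cdots\le c_k$. Define $\phi(\lambda)$ by replacing the parts $\equiv r$ of $\lambda$ with the staircase $r,r+p,\ldots,r+kp$, and appending the nonzero values among $pc_0,pc_1,\ldots,pc_k$ to $\mu$ as new parts $\equiv 0\pmod p$. The excess $p\sum_i c_i$ shed by the staircase exactly balances the total of these new parts, so $|\phi(\lambda)|=n$; and since the parts $\equiv r$ of $\phi(\lambda)$ form the consecutive staircase $r,r+p,\ldots,r+kp$, the image is indeed counted by $O(n,p,r)$.

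The inverse $\psi$ is equally explicit: given $\nu$ counted by $O(n,p,r)$ with staircase $r,r+p,\ldots,r+kp$ and remaining parts (all $\equiv 0\pmod p$) written in decreasing order as $f_1\ge f_2\ge\cdots$, pad with trailing zeros so the list has at least $k+1$ entries, and read off $pc_{k-j}=f_{j+1}$ for $0\le j\le k$. Put $a_i:=i+c_i$ to recover the parts $\equiv r$, and let $\mu:=(f_{k+2},f_{k+3},\ldots)$. Weak monotonicity of $(c_i)$ gives $a_0<a_1<\cdots<a_k$, and the inequality $f_{k+2}\le f_{k+1}=pc_0=a_0p$ guarantees that all parts of $\mu$ lie strictly below the smallest part $\equiv r$, so $\psi(\nu)$ is a legitimate $D$-partition.

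The main (and essentially only) obstacle is to verify that $\phi$ and $\psi$ are mutual inverses, which reduces to showing that the $k+1$ largest parts $\equiv 0\pmod p$ of $\phi(\lambda)$ are exactly the transferred values $pc_k\ge pc_{k-1}\ge\cdots\ge pc_0$. This hinges on the single inequality $\mu_1\le pc_0$, which is precisely the condition imposed by the domination hypothesis in the definition of $D$. Once this monotonicity is checked, the remaining verifications are routine bookkeeping, and Theorem~\ref{and1} follows.
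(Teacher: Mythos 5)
Your bijection is exactly the paper's, just run in the opposite direction and written in explicit coordinates: your $\phi$ (strip the staircase $r,r+p,\ldots,r+kp$ from the parts $\equiv r\pmod p$ and transfer the differences $pc_i$ to the $\equiv 0$ block) is the paper's inverse map $\mu \mapsto \mu_2 \cup \mu_3 \cup [\mu_1 - \mu_3]$, and your $\psi$ is the paper's forward map $\lambda \mapsto \lambda_1 + \lambda_2$. The argument is correct, and the key inequality $\mu_1 \le pc_0$ you isolate is precisely where the domination hypothesis enters, so the proposal matches the paper's proof in substance.
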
 
\noindent Setting $p = 2, r = 1$  in Theorem \ref{and1} gives rise to Theorem \ref{and0}.
We give a desired bijective proof.\\
\noindent Let $\lambda$ be enumerated by $O(n,p,r)$. We have the decomposition $\lambda = (\lambda_1, \lambda_2)$ where
$\lambda_1$ is the subpartition of $\lambda$ whose parts are $\equiv r \pmod{p}$, and $\lambda_2$ is the subpartition of $\lambda$ whose parts are congruent to $0 \pmod{p}$. Then the image is given by $\lambda_1 + \lambda_2$, i.e.
$$\lambda \mapsto \lambda_1 + \lambda_2$$
\noindent The inverse of the bijection is given as follows:\\
\noindent Let $\mu$ be a partition enumerated by $D(n,p,r)$. Then decompose $\mu$ as
$\mu = (\mu_1, \mu_2)$ where $\mu_1$ is the subpartition with parts congruent to $r \pmod{p}$ and $\mu_2$ is the subpartition with parts congruent to $0 \pmod{p}$. Construct $\mu_3$ as 
$$  \mu_3 = (p\ell(\mu_1) -  p + r, p\ell(\mu_1) -  2p + r, p\ell(\mu_1) - 3p + r, \ldots, r + 2p, r + p, r)$$
where $\ell(\mu_1)$ is the number of parts in $\mu_1$.\\
\noindent Then the image of $\mu$ is given by
$$ \mu \mapsto \mu_2 \cup  \mu_3 \cup  \left[\mu_1 - \mu_3 \right]. $$ 
\begin{example}
Consider $p = 4$, $r = 1$ and an $O(186,4,1)$-partition \\
$\lambda = (32, 32, 21, 17, 16, 13, 9,8,8,8,8,5,4, 4,4,1)$.
\end{example}
\noindent  By our mapping, $\lambda$ decomposes as follows:
$$ \lambda = \left((21,17,13,9,5,1), (32,32,16,8,8,8,8,4,4,4)\right).$$
The image is then given by
$$ (21,17,13,9,5,1,0,0,0,0) + (32,32,16,8,8,8,8,4,4,4)$$
\noindent(we append zeros to the subpartition with smaller length), and addition is componentwise in the order demonstrated. Thus
$$\lambda \mapsto (53, 49, 29, 17, 13, 9, 8, 4,4,4)$$
which is a partition enumerated by $D(190,4,1)$. \\\\
To invert the process, starting with $\mu = (53, 49, 29, 17, 13, 9,8,4,4,4)$, enumerated by $D(190,4,1)$, we have 
the decomposition $\mu = (\mu_1, \mu_2) = \left((53,49, 29, 17,13, 9), (8,4,4,4)\right)$ where $\mu_1 = (53,49, 29, 17, 9)$ and $\mu_2 = (8,4,4,4)$.\\
\noindent Note that $\ell(\mu_1) = 5$ so that $\mu_3 = (17,13,9,5,1)$. Hence, the image is
\begin{align*}
 \mu_2 \cup  \mu_3 \cup  \left[\mu_1 - \mu_3 \right] & = (8,4,4,4) \cup  (21, 17, 13, 9, 5, 1) \cup  \left[(53,49,29,17, 13,9) - (21, 17,13,9,5,1) \right]\\
                                                     & = (8,4,4,4) \cup (21, 17, 13, 9, 5, 1) \cup (32,32,16,8,8,8) \\
                                                     & = (32,32,21,17, 16, 13, 9, 8,8,8,8, 5,4,4,4,1),
\end{align*}
which is enumerated by $O(186,4,1)$ and the $\lambda$ we started with.
\begin{corollary}
The number of partitions of $n$ in which all parts $\not\equiv 0 \pmod{p}$ form an arithmetic progression with common difference $p$  and the smallest part is less than $p$ equals the number of partitions of $n$ in which parts $\not\equiv 0 \pmod{p}$ are distinct, have the same residue modulo $p$ and are greater than parts $\equiv 0 \pmod{p}$.
\end{corollary}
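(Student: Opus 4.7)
The plan is to view both sides as sums over $r \in \{1,\ldots,p-1\}$ of quantities that Theorem \ref{and1} already equates, and then to invoke Theorem \ref{and1} once for each residue class.

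First I would unpack the left-hand side. A partition counted there has a smallest part strictly less than $p$; since any part $\equiv 0 \pmod p$ is at least $p$, the smallest part must be some $r \in \{1,\ldots,p-1\}$. The arithmetic-progression condition with common difference $p$ then forces the parts $\not\equiv 0 \pmod p$ to be precisely $r,\, r+p,\, r+2p,\, \ldots,\, r+(k-1)p$ for some $k \geq 1$, and the remaining parts are multiples of $p$. In particular, every integer $\equiv r \pmod p$ below the largest such part appears as a part, so the partition is enumerated by $O(n,p,r)$ and carries at least one part $\equiv r \pmod p$. Since the value of $r$ is uniquely determined by the partition, the left-hand side equals $\sum_{r=1}^{p-1} O^{*}(n,p,r)$, where $O^{*}(n,p,r)$ denotes the subclass of partitions enumerated by $O(n,p,r)$ that contain at least one part $\equiv r \pmod p$.

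Next I would unpack the right-hand side analogously. If the partition has any part $\not\equiv 0 \pmod p$, then the common residue of such parts is some $r \in \{1,\ldots,p-1\}$; the conditions state exactly that those parts are distinct and strictly larger than the parts $\equiv 0 \pmod p$, which is the defining condition of $D(n,p,r)$. Thus the right-hand side equals $\sum_{r=1}^{p-1} D^{*}(n,p,r)$, with the analogous convention that at least one part $\equiv r \pmod p$ be present; both sides of the corollary tacitly exclude the vacuous case of all parts being $\equiv 0 \pmod p$ through their ``smallest part less than $p$'' and ``same residue modulo $p$'' phrasings.

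Finally I would apply Theorem \ref{and1}, which yields $O(n,p,r)=D(n,p,r)$ for every $r$, and then check that the bijection $\lambda \mapsto \lambda_{1}+\lambda_{2}$ respects the auxiliary property ``has at least one part $\equiv r \pmod p$''. This is immediate from the construction: the condition is equivalent to $\lambda_{1}$ being nonempty, and under the inverse map it is equivalent to $\mu_{1}$ (hence the staircase $\mu_{3}$) being nonempty. Therefore $O^{*}(n,p,r)=D^{*}(n,p,r)$ for each $r$, and summing these equalities over $r \in \{1,\ldots,p-1\}$ delivers the corollary. I do not anticipate a real obstacle; the main step is simply the careful translation showing that each side splits along $r$ into the classes already handled by Theorem \ref{and1}.
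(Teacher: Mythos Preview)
Your argument is correct and follows the same route as the paper's own proof: both sides are decomposed as sums over the residue $r\in\{1,\ldots,p-1\}$ and Theorem~\ref{and1} is applied termwise. Your treatment is in fact more careful than the paper's, which simply writes $\sum_{r=1}^{p-1}O(n,p,r)=\sum_{r=1}^{p-1}D(n,p,r)$ without isolating the issue of partitions with no part $\not\equiv 0\pmod p$; your passage to $O^{*}$ and $D^{*}$ and the observation that the bijection $\lambda\mapsto\lambda_1+\lambda_2$ preserves nonemptiness of the $r$-residue subpartition cleanly handles that overlap.
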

\begin{proof}
By Theorem \ref{and1}, we have $\sum\limits_{r = 1}^{p - 1}O(n,p,r) = \sum\limits_{r = 1}^{p - 1}D(n,p,r)$.
\end{proof}


\section{Related variations}
\noindent In Theorem \ref{and0}, if we reverse the roles of odd and even parts by letting any positive even integer less than the largest even part appear as a part and each odd part be greater than the largest even part, we obtain the following theorem.
\begin{theorem}\label{and2}
Let $r = 1, 3$ and $A(n,r)$ denote the number of partitions of $n$ in which each even integer less than the largest even part appears as a part and the smallest odd part is at least $r$ + the largest even part. Then $A(n,r)$ is equal to the number of partitions of $n$  with parts $\equiv r,2 \pmod{4}$.
\end{theorem}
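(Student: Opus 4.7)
I would prove this identity by computing the generating function of $A(n,r)$ and matching it against $1/((q^{r};q^{4})_{\infty}(q^{2};q^{4})_{\infty})$ via identity (\ref{here0}). The first step is to classify partitions counted by $A(n,r)$ by their largest even part $2m$ (with $m=0$ meaning no even parts occur). Since every positive even integer at most $2m$ must appear at least once and may repeat, the even parts contribute $q^{m(m+1)}/(q^{2};q^{2})_{m}$, where $q^{m(m+1)}$ accounts for the forced skeleton $2+4+\cdots+2m$. The odd parts are odd integers $\ge 2m+r$ with arbitrary multiplicities, contributing $1/(q^{2m+r};q^{2})_{\infty}$. Summing over $m\ge 0$ gives
$$\sum_{n\geq 0}A(n,r)\,q^{n}=\sum_{m=0}^{\infty}\frac{q^{m(m+1)}}{(q^{2};q^{2})_{m}\,(q^{2m+r};q^{2})_{\infty}}.$$

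Next I would use $(q^{2m+r};q^{2})_{\infty}=(q^{r};q^{2})_{\infty}/(q^{r};q^{2})_{m}$ to pull an infinite product outside the sum, obtaining
$$\frac{1}{(q^{r};q^{2})_{\infty}}\sum_{m=0}^{\infty}\frac{(q^{r};q^{2})_{m}\,q^{m(m+1)}}{(q^{2};q^{2})_{m}}.$$
The remaining sum is precisely the left-hand side of (\ref{here0}) after the substitutions $q\mapsto q^{2}$ and $a\mapsto q^{r}$, so it equals $\prod_{n\geq 1}(1-q^{4n+r-2})(1+q^{2n})$. Combining with the standard evaluation $\prod_{n\geq 1}(1+q^{2n})=1/(q^{2};q^{4})_{\infty}$ and the factorization $(q^{r};q^{2})_{\infty}=(q^{r};q^{4})_{\infty}(q^{r+2};q^{4})_{\infty}$, the product $\prod_{n\geq 1}(1-q^{4n+r-2})$ simplifies to $(q^{r+2};q^{4})_{\infty}$ (this reads as $(q^{3};q^{4})_{\infty}$ for $r=1$ and $(q^{5};q^{4})_{\infty}$ for $r=3$), cancelling half of the outer product. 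What remains is $1/((q^{r};q^{4})_{\infty}(q^{2};q^{4})_{\infty})$, which is manifestly the generating function for partitions of $n$ with parts $\equiv r,2\pmod{4}$.

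The crux of the argument is recognizing that the generating function sum matches (\ref{here0}) under the right dilation and specialization; the rest is routine product manipulation. The only place requiring care is the case split on $r\in\{1,3\}$ when identifying $\prod_{n\geq 1}(1-q^{4n+r-2})$ with a factor of $(q^{r};q^{2})_{\infty}$, but both values of $r$ line up uniformly as $(q^{r+2};q^{4})_{\infty}$. One might hope to produce a bijective proof paralleling Theorem \ref{and1}, but the map given there does not carry over directly: in the present setting the parity roles are swapped (the staircase is imposed on the even parts, and the size constraint falls on the odd parts, which need not be distinct), so a new combinatorial construction would be required.
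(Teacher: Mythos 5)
Your proof is correct and takes essentially the same route as the paper: condition on the largest even part to obtain $\sum_{m\ge 0} q^{m(m+1)}/\bigl((q^{2};q^{2})_{m}(q^{2m+r};q^{2})_{\infty}\bigr)$, pull out $(q^{r};q^{2})_{\infty}$, and apply \eqref{here0} with $q\mapsto q^{2}$, $a\mapsto q^{r}$, finishing with routine product manipulation. In fact your intermediate factors $(q^{r};q^{2})_{\infty}$ and $(1-q^{4n+r-2})$ are the correct forms of what the paper writes (apparently with typos, harmless for $r=1$ but not for $r=3$) as $(q;q^{2})_{\infty}$ and $(1-q^{4n-r})$.
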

\begin{proof}
\begin{align*}
\sum_{n = 0}^{\infty}A(n,r)q^{n} & = \frac{1}{\prod_{j = 0}^{\infty}(1 - q^{2j + r})} + \sum_{n = 1}^{\infty}\frac{q^{n(n+ 1)}}{(q^{2};q^{2})_{n}}\frac{1}{\prod_{j = n}^{\infty}(1 - q^{2j + r})}
\end{align*}
\begin{align*}
                                   & = \sum_{n = 0}^{\infty}\frac{q^{n(n+ 1)}}{(q^{2};q^{2})_{n}}\frac{1}{\prod_{j = n}^{\infty}(1 - q^{2j + r})}\\
                                   & = \sum_{n = 0}^{\infty}\frac{q^{n(n+ 1)}}{(q^{2};q^{2})_{n}}\frac{1}{(q^{2n + r};q^{2})_{\infty}}\\
                                   & = \sum_{n = 0}^{\infty}\frac{q^{n(n+ 1)}}{(q^{2};q^{2})_{n}}\frac{(q^{r};q^{2})_{n}}{(q;q^{2})_{\infty}}\\
                                   & = \frac{1}{(q;q^{2})_{\infty}}\sum_{n = 0}^{\infty}\frac{q^{n(n+ 1)}}{(q^{2};q^{2})_{n}} (q^{r};q^{2})_{n}\\
                                   & = \frac{1}{(q;q^2)_{\infty}}\prod_{n = 1}^{\infty}(1 - q^{4n - r})(1 + q^{2n})\,\,(\text{by}\,\, \eqref{here0})\\  
                                   & = \frac{\prod_{n = 1}^{\infty}(1 - q^{4n - r})(1 + q^{2n})(1 - q^{2n})}{(q;q^{2})_{\infty}(q^{2};q^{2})_{\infty}}\\
                                   & = \prod_{n = 1}^{\infty}\frac{(1 - q^{4n - r})(1 - q^{4n})}{1 - q^{n}}\\
                                   & = \prod_{n = 1}^{\infty}\frac{1}{(1 - q^{4n + r})(1 - q^{4n + 2})}.
\end{align*}
\end{proof}
A bijection is given in Section \ref{sec}.
\begin{theorem}\label{och}
Let $C(n)$ be the number of partitions where if $2j$ occurs, then all even integers less than $2j$ occur as parts and any part greater than $2j$ is odd. Then $C(n) \equiv 1\pmod{2}$ if and only if $n = \frac{j(j + 1)}{2}$  for some $j\geq 0$ 
\end{theorem}
\begin{proof}
\begin{align*}
\sum_{n = 0}^{\infty}C(n)q^{n} & = \sum_{n = 0}^{\infty}\frac{q^{2 + 4 + 6 + \ldots + 2n}}{(1 - q)(1 - q^{2})\ldots (1 - q^{2n})(1 - q^{2n + 1})(1 - q^{2n + 3})\ldots}\\
                               & = \sum_{n = 0}^{\infty}\frac{q^{n^{2} + n}}{(q;q)_{2n}(q^{2n + 1};q^{2})_{\infty}}\\
                               & = \sum_{n = 0}^{\infty}\frac{q^{n^{2} + n}}{(q;q)_{2n}}\frac{(q;q^{2})_{n}}{(q;q^{2})_{\infty}} \\
                              & = \frac{1}{(q;q^{2})_{\infty}}\sum_{n = 0}^{\infty}\frac{q^{n^{2} + n}}{(q^{2};q^{2})_{n}} \\
                              & = \frac{1}{(q;q^{2})_{\infty}}\prod_{n = 1}^{\infty}(1 + q^{2n}) \,\, (\text{by}\,\,\eqref{here0},\,\,a = 0, q:= q^{2})
\end{align*}
\begin{align*}
                              & =  \prod_{n = 1}^{\infty}\frac{1 - q^{4n}}{1 - q^{n}}\\
                              & \equiv \prod_{n = 1}^{\infty}(1 - q^{n})^{3} \pmod{2} \\
                              & \equiv \sum\limits_{n = 0}^{\infty} q^{n(n + 1)/2} \pmod{2}          
\end{align*}

\end{proof}
\begin{remark}
It is clearly observable from line 6 of the proof that $C(n)$ is equal to the number of partitions of $n$ into parts not divisible by 4. To prove this partition identity combinatorially, decompose $\lambda \in C(n)$ into $(\lambda_o, \lambda_{e})$ where $\lambda_o$ is the subpartition consisting of odd parts, and $\lambda_e$ is the subpartition consisting of even parts.
Then compute $\phi(\lambda_o)$ and conjugate $\lambda_e$. Split each part of $\lambda_e^{\prime}$ into two identical parts, obtaining $\mu$. Then $$ \phi(\phi(\lambda_o) \cup \mu)$$ is a partition in which parts are not divisible by 4. This transformation is  invertible.
\end{remark}
\subsection{Bijective Proof of Theorem \ref{and2}}\label{sec}
Let  $\mu$ be a partition enumerated by $A(n,r)$. Execute the following steps:\\
\begin{enumerate}
\item[1.] Conjugate $\mu$, obtaining $\mu^{\prime}$.
\item[2.] If $\mu^{\prime}$ has no part with odd multiplicity, set $\bar{\alpha}:= \mu^{\prime}$ and go to step 4. Otherwise, decompose $\mu^{\prime}=(\alpha,\beta)$ where
 
$\beta$ is the subpartition of $\mu^{\prime}$ consisting of all parts less than or equal to largest part that has odd multiplicity. and $\alpha$ is the subpartition of $sub(\mu^{\prime}, \beta)$. Recall that $\beta$ can be written as 
$$\beta = \langle \beta_{1}^{m_{\beta_{1}}(\beta)} \beta_{2}^{m_{\beta_{2}}(\beta)}\ldots \beta_{m}^{m_{\beta_{m}}(\beta)} \rangle $$
where $\beta_{1} > \beta_{2} > \ldots > \beta_{m}$. We use this notation of $\beta$ in the next step.
\item[3.] 
\begin{enumerate}
\item[a.] If $m_{\beta_{1}}(\beta) \not\equiv r \pmod{4}$, then update $\alpha$ and $\beta$  as follows:
 $$\beta := sub(\beta, \langle \beta_{1}^{2}\rangle)\,\,\,\,,\alpha: = \alpha \cup \langle \beta_{1}^{2}\rangle.$$  
\item[b.]  For $j = 2,3,\ldots, m$, if $m_{\beta_{j}}(\beta)  \not\equiv 0 \pmod{4}$, then update $\alpha$ and $\beta$ as follows:   $$\beta := sub(\beta,  \langle \beta_{j}^{2}\rangle) \,\,\,,\alpha: = \alpha \cup \langle \beta_{j}^{2}\rangle.$$
\end{enumerate}
\noindent Now call the new updated $\alpha$ and $\beta$, $\bar{\alpha}$ and $\bar{\beta}$, respectively. Observe that $\mu^{\prime} = \bar{\alpha} \cup \bar{\beta}$.
 \item[4.] Compute $$ \gamma = L_{2}(\phi(L_{2}(\bar{\alpha}) )).$$
\end{enumerate}
Note that $$\lambda = \bar{\beta}^{\prime}\cup \gamma $$
is a partition into parts $\equiv r,2 \pmod{4}$.\\\\
\noindent Before giving the inverse mapping, let us look at an example.\\\\
\underline{Example}\\\\
Let $r = 1$ with $\mu = \langle 23^{2}17^{1}11^{3}8^{1}6^{3}4^{1}2^{4}\rangle \in A(124,1)$. Then $\mu^{\prime} = \langle 15^{2}11^{2}10^{2}7^{2}6^{3}3^{6}2^{6}\rangle$. Thus $\alpha = 15^{2}11^{2}10^{2}7^{2}$ and $\beta = 6^{3}3^{6}2^{6}$.
Updating $\alpha$ and $\beta$ yields: $\bar{\alpha} = \langle 15^{2}11^{2}10^{2}7^{2}6^{2}3^{2}2^{2}\rangle$ and $\bar{\beta} = \langle 6^{1}3^{4}2^{4}\rangle$. \\
Now we have $L_{2}(\bar{\alpha}) = \langle 30,22,20,14,12,6,4\rangle$ so that $ \phi (L_{2}(\bar{\alpha})) = \langle 15^{2}11^{2}7^{2}5^{4}3^{6}  1^{4} \rangle $. Thus
$$ \gamma = L_{2}(\phi (L_{2}(\bar{\alpha})) ) = \langle 30^{1}22^{1}14^{1}10^{2}6^{3}2^{2} \rangle. $$
Since $\bar{\beta}^{\prime} = \langle 9^{2}5^{1}1^{3} \rangle$, the image  is 
$$\lambda = \bar{\beta}^{\prime}\cup  \gamma = \langle 30^{1}22^{1}14^{1}10^{2}9^{2}6^{3}5^{1}2^{2}1^{3}\rangle.$$
\underline{\textit{The inverse}}\\\\
Let $\lambda$ be a partition of $n$ into parts $\equiv r,2 \pmod{4}$. Decompose $\lambda$ as follows 
$\lambda = \lambda_{1}\cup \lambda_{r})$ where $\lambda_{1}$ is the subpartition of $\lambda$ with  parts $\equiv 2 \pmod{4}$ and $\lambda_{r}$ is the subpartition with parts $\equiv r \pmod{4}$.  Compute

$$ h =  L_{2}^{-1}(\phi^{-1}(L_{2}^{-1}(\lambda_{1}))).$$
\noindent Then $$\mu = \lambda_{r} \, + \, h^{\prime}$$ is a partition in $A(n,r)$. \\\\
\noindent For instance, consider $\lambda = \langle 30^{1}22^{1}14^{1}10^{2}9^{2}6^{3}5^{1}2^{2}1^{3}\rangle$ in the example above ($r = 1$). Then $\lambda_{1} = \langle 30^{1}22^{1}14^{1}10^{2}6^{3}2^{2}\rangle$ and $\lambda_{r} = \langle 9^{2}5^{1}1^{3}\rangle$.\\
Now $L_{2}^{-1}(\lambda_{1})= \langle 15^{2}11^{2}7^{2}5^{4}3^{6}1^{4}\rangle $  so that $\phi^{-1}(L_{2}^{-1}(\lambda_{1})) = \langle 30,22,20,14,12,6,4\rangle $.\\
Thus $h = L_{2}^{-1}(\langle 30,22,20,14,12,6,4\rangle) = \langle 15^{2}11^{2}10^{2}7^{2}6^{2}3^{2}2^{2}\rangle $  and that $h^{\prime} = \langle 14^{2}12^{1}10^{3}8^{1}6^{3}4^{1}2^{4} \rangle$. Hence
 $$\mu = \langle 9^{2}5^{1}1^{3}\rangle + \langle 14^{2}12^{1}10^{3}8^{1}6^{3}4^{1}2^{4} \rangle = \langle 23^{2}17^{1}11^{3}8^{1}6^{3}4^{1}2^{4}\rangle.$$

\begin{theorem}
Let $B(n)$ be the number of partitions of $n$ in which either a) all parts are even and distinct or  b) 1 must appear and odd parts appear without gaps, even parts are distinct and each is greater than or equal to 3 + the largest odd part. Denote by $B_{e}(n)$ (resp. $B_{o}(n)$), the number of $B(n)$-partitions with an even (resp. odd) number of even parts. Then
$$ B_{e}(n) - B_{o}(n) = 
\begin{cases}
1 & \text{if}\,\,n = m(4m \pm 1), m \geq 0\\
0 & \text{otherwise}.
\end{cases}
$$
\end{theorem}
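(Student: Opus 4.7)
My plan is to compute the generating function $F(q) = \sum_n \bigl(B_e(n) - B_o(n)\bigr) q^n$, recast it as the infinite product $\prod_{n\ge 1}(1-q^{8n})(1+q^{8n-3})(1+q^{8n-5})$ via identity \eqref{here1}, and then apply the Jacobi triple product to identify this product with the bilateral theta series $\sum_{m \in \mathbb{Z}} q^{m(4m-1)}$, whose non-zero coefficients occur precisely at the values $n = m(4m \pm 1)$ for $m \ge 0$.

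To assemble $F(q)$, I split the $B(n)$-partitions according to the two cases. In case (a) every part is even, so the sign $(-1)^{\#\text{even parts}}$ reduces to $(-1)^{\#\text{parts}}$, contributing $\prod_{n\ge 1}(1 - q^{2n}) = (q^2;q^2)_\infty$. For case (b), I condition on the largest odd part, say $2k-1$ with $k \ge 1$. The odd parts $1,3,\ldots,2k-1$, each appearing with multiplicity at least one, yield the factor $\prod_{j=1}^{k}\frac{q^{2j-1}}{1 - q^{2j-1}} = \frac{q^{k^2}}{(q;q^2)_k}$; the signed distinct even parts $\ge 2k+2$ yield $\prod_{j\ge k+1}(1 - q^{2j}) = \frac{(q^2;q^2)_\infty}{(q^2;q^2)_k}$. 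Summing over $k \ge 1$, adding the $k=0$ contribution from case (a), and simplifying via $(q;q^2)_k (q^2;q^2)_k = (q;q)_{2k}$, I obtain
\[
F(q) \;=\; (q^2;q^2)_\infty \sum_{k\ge 0}\frac{q^{k^2}}{(q;q)_{2k}}.
\]

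The next step invokes \eqref{here1} to turn the inner sum into a product; combining with the prefactor $(q^2;q^2)_\infty$ (which will match the denominator implicit in the Slater-type form of \eqref{here1}), I arrive at
\[
F(q) \;=\; \prod_{n\ge 1}(1-q^{8n})(1+q^{8n-3})(1+q^{8n-5}).
\]
Finally, I apply the Jacobi triple product $\sum_{m\in\mathbb{Z}} z^m q^{m^2} = \prod_{n\ge 1}(1-q^{2n})(1+zq^{2n-1})(1+z^{-1}q^{2n-1})$ with $q\mapsto q^4$ and $z \mapsto q^{-1}$, obtaining $F(q) = \sum_{m\in\mathbb{Z}} q^{4m^2 - m}$; splitting the bilateral sum into the $m \ge 0$ tail and the $m \le -1$ tail (substituting $m \mapsto -m$ in the latter) rewrites this as $\sum_{m\ge 0} q^{m(4m-1)} + \sum_{m\ge 1} q^{m(4m+1)}$, from which the theorem is read off.

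The main obstacle is Step 2: carefully enumerating case (b) configurations with the correct sign $(-1)^{\#\text{even parts}}$ so that the sum telescopes into $(q^2;q^2)_\infty \sum_k \frac{q^{k^2}}{(q;q)_{2k}}$, and verifying that identity \eqref{here1} can be applied in the form needed to produce the clean product. A direct check at small $n$ (for instance $n = 0, 3, 5, 14, 18$) should be performed to confirm that the generating function has been assembled correctly before committing to the product manipulation.
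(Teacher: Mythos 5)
Your reduction of case (b) to the sum $\sum_{k\ge 0} q^{k^2}/(q;q)_{2k}$ is where the argument breaks down. Under your (literal) reading of the hypothesis --- each odd integer $1,3,\ldots,2k-1$ occurs at least once --- the odd parts contribute $q^{k^2}/(q;q^2)_k$ and you arrive at $F(q)=(q^2;q^2)_\infty\sum_{k\ge0}q^{k^2}/(q;q)_{2k}$. But \eqref{here1} cannot be applied to this: as printed it would leave an uncancelled factor of $(q^2;q^2)_\infty$ in front of the product, and the ``Slater-type form with the denominator built in'' that you invoke to absorb that factor is Slater's identity for the sum with exponent $2k^2$, namely $\sum_{k\ge0}q^{2k^2}/(q;q)_{2k}=(q^2;q^2)_\infty^{-1}\prod_{n\ge1}(1+q^{8n-3})(1+q^{8n-5})(1-q^{8n})$ --- not for $\sum_{k\ge0}q^{k^2}/(q;q)_{2k}$. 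No such identity holds for your sum, and indeed your $F(q)$ is not the stated theta series: its coefficient of $q^1$ equals $1$, coming from the partition $(1)$, which under your reading is a legitimate case-(b) partition with zero even parts, whereas $1\neq m(4m\pm1)$ for any $m\ge 0$. The small-$n$ sanity check you propose at the end would have caught this immediately.

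The paper's own computation assigns case (b) the weight $q^{2(1+3+\cdots+(2n-1))}/(q;q^2)_n=q^{2n^2}/(q;q^2)_n$, i.e.\ it tacitly requires each odd integer up to the largest odd part to occur at least \emph{twice}; only then does the generating function become $(q^2;q^2)_\infty\sum_{n\ge0}q^{2n^2}/(q;q)_{2n}$, to which the (corrected) Slater identity applies and everything collapses to $\sum_{m\in\mathbb{Z}}q^{4m^2+m}$. So the discrepancy is ultimately between the theorem's wording and the proof the paper actually runs: your generating function is the faithful one for the statement as written, and it shows that statement fails already at $n=1$; the paper's generating function proves a correct result about a differently restricted class of partitions. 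To repair your write-up you must either adopt the ``each odd part appears at least twice'' reading (replacing $q^{k^2}$ by $q^{2k^2}$ and citing Slater's identity in the form carrying the $(q^2;q^2)_\infty$ denominator), or flag the theorem statement itself as needing amendment. Your final step --- the Jacobi triple product specialization yielding $\sum_{m}q^{4m^2\pm m}$ and the identification of its support with $\{m(4m\pm 1):m\ge 0\}$ --- is correct and matches the paper.
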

\noindent Note that the generating function for the sequence $B(0), B(1), B(2), \ldots$ is $$ \frac{1}{(q^{2};q^{2})_{\infty}} + \sum_{n = 1}^{\infty}\frac{q^{2(1 + 3 + 5 + \ldots + 2n - 1)}}{(q;q^{2})_{n}}(-q^{2n + 2};q^{2})_{\infty}$$
\noindent Hence
\begin{align*}
\sum_{n = 0 }^{\infty}(B_{e}(n) - B_{o}(n))q^{n} & = (q^{2};q^{2})_{\infty} + \sum_{n = 1}^{\infty}\frac{q^{2(1 + 3 + 5 + \ldots + 2n - 1)}}{(q;q^{2})_{n}}(q^{2n + 2};q^{2})_{\infty}\\
                                                 & = \sum_{n = 0}^{\infty}\frac{q^{2n^{2}}}{(q;q^{2})_{n}}\frac{(q^{2};q^{2})_{\infty}}{(q^2;q^2)_{n}}\\
                                                              & = (q^{2};q^{2})_{\infty}\sum_{n = 0}^{\infty}\frac{q^{2n^{2}}}{(q;q)_{2n}}\\
                                                 & = \prod_{n = 1}^{\infty} (1 + q^{8n - 3})(1 + q^{8n - 5})(1 - q^{8n})\,\,\text{by}\,\,\eqref{here1}\\
                                                & = \sum_{n = -\infty}^{\infty}q^{4n^{2} + n},
                                               \end{align*}
and the result follows.
\begin{corollary}
For all $n\geq 0$, $B(n)$ is odd if and only if $n = m(4m \pm 1)$ for some integer $m \geq 0$.
\end{corollary}
\noindent Finally, consider the partition function; \\
$\tau(n)$: the number of partitions of $n$ in which even parts  are distinct or if an even part is repeated, it is the smallest and occurs exactly twice and all other even parts are distinct.\\
\noindent Let $\tau_{e}(n)$ (resp. $\tau_{o}(n)$) denote the number of $\tau(n)$-partitions with an even (resp. odd) number of distincteven parts. Then the following identity follows:
\begin{theorem}\label{yanu}
For all non-negative integers $n$, we have 
$$\tau_{e}(n) - \tau_{o}(n) = 
\begin{cases}
1 ,& \text{if}\,\, n = 3m, 3m + 1, m\geq 0 \\
0 ,& \text{otherwise}
\end{cases}
$$
where $\tau_{e}(0) - \tau_{o}(0): = 1$.
\end{theorem}
\begin{proof}
\noindent Note that 
\begin{align*}
 \sum\limits_{n = 0}^{\infty}\tau(n)q^{n} & = \frac{(-q^{2};q^{2})_{\infty}}{(q;q^{\infty})_{\infty}} + \sum\limits_{n = 1}^{\infty}q^{2n + 2n}(-q^{2n + 2};q^{2})_{\infty} (q^{2n + 1};q^{2})_{\infty}^{-1} \\
                                         & = \sum_{n = 0}^{\infty}q^{2n + 2n}(-q^{2n + 2};q^{2})_{\infty} (q^{2n + 1};q^{2})_{\infty}^{-1}
 \end{align*}
so that 
\begin{align*}
\sum_{n = 0}^{\infty}(\tau_{e}(n) - \tau_{o}(n))q^{n} & = \sum_{n = 0}^{\infty}q^{2n + 2n}(q^{2n + 2};q^{2})_{\infty} (q^{2n + 1};q^{2})_{\infty}^{-1}\\
                                                      & = \frac{(q^{2};q^{2})_{\infty}}{(q;q^{2})_{\infty}}\sum_{n = 0}^{\infty}\frac{q^{4n}}{(q^{2};q^{2})_{n}}(q;q^{2})_{n}\\
                                                      & = \frac{(q^{2};q^{2})_{\infty}}{(q;q^{2})_{\infty}} \frac{(q;q^{2})_{\infty}}{(q^{4};q^{2})_{\infty}} \sum_{n = 0}^{\infty} \frac{q^{n}}{(q^{2};q^{2})_{n}}(q^{4};q^{2})_{n}\\
                                                      &\hspace{3mm}\,\,\left( \text{by}\,\,\eqref{here2}, a=c=0, b= q, t = q^{4}\right). \\
                                                      & = (1 - q^{2})\sum_{n = 0}^{\infty} \frac{q^{n}}{(q^{2};q^{2})_{n}}(q^{4};q^{2})_{n} \\
                                                      & = (1 - q^{2})\sum_{n = 0}^{\infty} \frac{(1 - q^{2n + 2})q^{n}}{1 - q^{2}}\\
                                                      & = \sum_{n = 0}^{\infty}(1 - q^{2n + 2})q^{n}\\
                                                      & = \sum_{n = 0}^{\infty} q^{n} - \sum_{n = 0}^{\infty} q^{3n + 2}\\
                                                      & = \sum_{n = 0}^{\infty} q^{3n} + \sum_{n = 0}^{\infty}q^{3n + 1}.
\end{align*}
\end{proof} 
\begin{example}
Consider $n = 8$.
\end{example}
\noindent The $\tau(8)$-partitions are:
$$(8), (7,1), (6,2), (5,3), (5,1,1,1), (4,4), (4,2,1,1), (3,3,1,1), (3,1,1,1,1,1), $$
$$(6,1,1),(5,2,1), (4,3,1), (4,2,2), (4,1,1,1,1), (3,3,2), (3,2,1,1,1),$$
$$ (2,1,1,1,1,1,1), (1,1,1,1,1,1,1,1).$$
The $\tau_{e}(8)$-partitions are:
$$ (7,1), (6,2), (5,3), (5,1,1,1), (4,4), (4,2,1,1), (3,3,1,1), (3,1,1,1,1,1), (1,1,1,1,1,1,1,1),$$
and $\tau_{o}(8)$-partitions are:
$$(8), (6,1,1),(5,2,1), (4,3,1), (4,2,2), (4,1,1,1,1), (3,3,2), (3,2,1,1,1), (2,1,1,1,1,1,1),$$
Indeed $\tau_{e}(8) - \tau_{o}(8) = 0$.\\
\noindent The above theorem can be used to determine the parity of $\tau(n)$. We write down this as a consequence in the corollary below.
\begin{corollary}
For all $n \geq 0$, $\tau(n)$ is odd if and only if $n \equiv 0, 1\pmod{3}$.
\end{corollary}

Darlison Nyirenda \\
{\small School of Mathematics, University of the Witwatersrand}\\
{\small Wits 2050, Johannesburg, South Africa.}\\ 
{\small darlison.nyirenda@wits.ac.za}  \\\\
Abdulaziz M. Alanazi \\
{\small Department of Mathematics, Faculty of Sciences}\\
{\small University of Tabuk, P.O. Box 741, Tabuk 71491, Saudi Arabia}\\
{\small  am.alenezi@ut.edu.sa  }

\end{document}